\documentclass{amsart}
\usepackage{amsfonts, amsmath, amssymb, amsthm, mathtools, extarrows, hyperref,tikz-cd}
\usepackage{cite} 
\usepackage[small,nohug,heads=LaTeX]{diagrams}
\diagramstyle[labelstyle=\scriptstyle]
\textwidth=6in
\textheight=9in
\hoffset=-0.375in
\voffset=-01in
\setcounter{section}{0}
\newtheorem{theorem}{Theorem}[section]
\newtheorem{lemma}[theorem]{Lemma}
\newtheorem{proposition}[theorem]{Proposition}
\newtheorem{corollary}[theorem]{Corollary}
\theoremstyle{definition} 
 
\newtheorem{definition}[theorem]{Definition}
\newtheorem{example}[theorem]{Example}

\newcommand{\floor}{\lfloor \frac{P-2}{2}\rfloor}

\author{Gioia Failla, Zachary Flores, Chris Peterson}
\address{University of Reggio Calabria, Department DIIES
Via Graziella, Feo di Vito, Reggio Calabria}
\email{gioia.failla@unirc.it} 
\address{Department of Mathematics, Colorado State University,  Fort Collins, CO 80523, USA}
\email{flores@math.colostate.edu}
\address{Department of Mathematics, Colorado State University, Fort Collins, CO 80523, USA}
\email{peterson@math.colostate.edu}
\subjclass[2010]{Primary 13E10, 13F20, 13D02; Secondary 14F05, 14M12} 
\title{On the Weak Lefschetz Property for Vector Bundles on $\mathbb P^2$}
\begin{document}           

\begin{abstract} 
Let $R=\mathbb K[x,y,z]$ be a standard graded polynomial ring where $\mathbb K$ is an algebraically closed field of characteristic zero. Let $M = \oplus_j M_j$ be a finite length graded $R$-module.  We say that $M$ has the Weak Lefschetz Property if there is a homogeneous element $L$ of degree one in $R$ such that the multiplication map $\times L : M_j \rightarrow M_{j+1}$ has maximal rank for every $j$. The main result of this paper is to show that if $\mathcal E$ is a locally free sheaf of rank 2 on $\mathbb P^2$ then the first cohomology module of $\mathcal E$, $H^1_*(\mathbb P^2, \mathcal E)$, has the Weak Lefschetz Property.
 
\end{abstract}
\maketitle


\section{Introduction} 

Let $\mathbb K$ be an algebraically closed field of characteristic zero and let $R=\mathbb K[x,y,z]$ be the polynomial ring with standard grading $R=\oplus_{j\geq 0} R_j$.   Let $M = \oplus_{j\in \mathbb Z} M_j$ be a graded $R$-module.
We say that $M$ is of {\it finite length} if all but a finite number of the $M_j$ are equal to $0$ (with each $M_j$ a finite dimensional $\mathbb K$-vector space).  We say that $M$ has the {\it Weak Lefschetz Property} if there is a linear form $L \in R_1$ such that the $\mathbb K$-linear map $\times L:  M_j\rightarrow M_{j+1}$ has maximal rank for all $j$.  
Stanley and others showed how the Weak Lefschetz Property, a property that is geometric/algebraic in nature, ties in with several interesting problems of a combinatorial nature \cite{cook2011weak, li2010monomial, stanley1980number, stanley1980weyl}. In particular, Stanley utilized the property to complete the proof of McMullen's conjecture on the characterization of $f$-vectors of simplicial polytopes. In honor of the influential works of Stanley, the Weak Lefschetz Property is also referred to as the {\it Weak Stanley Property} in the literature. There has been a rich body of research establishing the existence or non-existence of the Weak Lefschetz Property for various types of Artinian algebras, in particular for Artinian Gorenstein algebras \cite{boij1999components, harima1995characterization, harima2003weak, iarrobino1994associated, watanabe1998note}  and other Artinian algebras with special structure \cite{mezzetti2013laplace, migliore2011monomial, zanello2006non}. Within this rapidly growing body of research involving the Weak Lefschetz Property, we found the following survey type works to be very helpful \cite{harima2013lefschetz, migliore2013survey}.

There were two papers that played a major role in inspiring us to utilize the vector bundle based techniques that ultimately led to a proof of our main result. The first was the paper by Harima et al \cite{harima2003weak} which made use of the Grauert-M\"ulich theorem to gain further insight into the Weak Lefschetz Property of a height three Artinian complete intersection.  The Grauert-M\"ulich theorem enabled them to pinpoint the generic splitting type of a stable, normalized, rank two vector bundle on $\mathbb P^2$ which enabled precise homological conclusions to be made. The second influential work for us was the paper by Brenner and Said \cite{brenner2007syzygy} which made further use of the Grauert-M\"ulich theorem for higher rank bundles on $\mathbb P^2$ and solidified the connection between the generic splitting type of a bundle and the Weak Lefschetz Property. 

It is very natural to study height three Artinian complete intersections via the Koszul complex. First of all, the Koszul complex is exact for complete intersections. Second, by sheafifying the Koszul complex, one can identify the first cohomology module of an associated rank two locally free sheaf as the Artinian module $R/(F_1, F_2, F_3)$ where $F_1,F_2,F_3$ is a regular sequence of homogeneous polynomials in $R$ defining the complete intersection. A natural generalization can be obtained via the Buchsbaum-Rim complex associated to an $R$-graded map $\phi:F\rightarrow G$ where $F$ is a free $R$-module of rank $n+2$ and $G$ is a free $R$-module of rank $n$.  In particular, if the cokernel of $\phi$ is of codimension three, which over $R=\mathbb K[x,y,z]$ corresponds to the cokernel being a module of finite length, then the Buchsbaum-Rim complex is exact. By sheafifying this complex we can again identify the first cohomology module of an associated rank two locally free sheaf, $\mathcal E$, as the cokernel of $\phi$. As in the papers \cite{brenner2007syzygy, harima2003weak}, it is crucial to understand the generic splitting type of $\mathcal E$ and its relationship to the multiplication between consecutive graded components of the cokernel of $\phi$ induced by a general linear form.

The paper is broken into four short sections. In section two of this paper we provide background meant to clarify the connection between the Buchsbaum-Rim complex for a certain class of finite length $\mathbb K[x,y,z]$-modules and rank 2 vector bundles on $\mathbb P^2$. The third section contains the statement and proofs of the main results of the paper. In particular, we show that the first cohomology module of any rank 2 vector bundle on $\mathbb P^2$ satisfies the Weak Lefschetz Property. The final section consists of examples, some potential paths for future research, and concluding remarks.

\section{The Buchsbaum-Rim complex}

Let $R=\mathbb K[x,y,z]$, let $\mathbf F=\oplus_{i=1}^{n+2} R(-a_i)$, let $\mathbf G=\oplus_{i=1}^{n} R(-b_i)$, let $a=a_1+\dots+a_{n+2}$, and let $b=b_1+\dots + b_n$. Given a graded degree zero map $\phi:\mathbf F \rightarrow \mathbf G$ we have a kernel $\mathbf E$, cokernel $\mathbf M$ and an exact sequence

\begin{equation}\label{SESModule}
0\rightarrow \mathbf E\rightarrow \mathbf F\rightarrow \mathbf G \rightarrow \mathbf M \rightarrow 0.
\end{equation}

In addition,  we have a Buchsbaum-Rim complex associated to $\phi:\mathbf F \rightarrow \mathbf G$ \cite{buchsbaum1964generalized,buchsbaum1964generalizedb}. If the cokernel of $\phi$ has the ``expected codimension'', which in this case corresponds to requiring that $\mathbf M$ has finite length, then the Buchsbaum-Rim complex is exact and has the form: 

\begin{equation}\label{BRModule}
0\rightarrow \mathbf G^{\vee}(b-a) \rightarrow \mathbf F^{\vee}(b-a) \rightarrow \mathbf F \rightarrow \mathbf G\rightarrow \mathbf M \rightarrow 0
\end{equation}

  This complex is one of a much larger family of complexes associated to sufficiently general maps between $R$-modules. These complexes are exact if a certain genericity condition is met and they can be derived by considering ``strands'' of a particular Koszul complex (see \cite{eisenbud2013commutative} Appendix A2.6 for details). 

If we sheafify (\ref{BRModule}) then we get an exact sequence of locally free sheaves 

\begin{equation}\label{BRSheaf}
0\rightarrow \mathcal G^{\vee}(b-a) \rightarrow \mathcal F^{\vee}(b-a) \rightarrow \mathcal F \rightarrow \mathcal G\rightarrow 0
\end{equation}
 which can be decomposed into the two short exact sequences 
 \begin{equation}\label{SES1}
 0\rightarrow \mathcal G^{\vee}(b-a) \rightarrow \mathcal F^{\vee}(b-a) \rightarrow \mathcal E\rightarrow 0
 \end{equation}
 \begin{equation}\label{SES2}
 0\rightarrow \mathcal E \rightarrow \mathcal F \rightarrow \mathcal G \rightarrow 0.
 \end{equation}
 Note that (\ref{SES2}) is also the sheafification of (\ref{SESModule}). The locally free sheaf $\mathcal E$ has rank two and is an example of a (first) Buchsbaum-Rim sheaf. The apparent symmetry of the Buchsbaum-Rim complex is closely related to the fact that a rank 2 locally free sheaf is self dual (up to a twist by a line bundle). In general, the structure found in the Buchsbaum-Rim complex is reflected in properties of $\mathcal E$, in properties of its sections, and in properties of its cohomology modules \cite{kreuzer2000determinantal, migliore1999buchsbaum}. In particular, the rigidity of the Buchsbaum-Rim complex, when it is exact, suggests that properties of the objects involved reduce to combinatorial considerations of the $a_i$ and $b_i$ involved in the definitions of $\mathbf F$ and $\mathbf G$. In the next section, we will see that this is indeed the case. Let $H^0_*(\mathbb P^2, \mathcal E)$ denote the module $\oplus_{i\in \mathbb Z}H^0(\mathbb P^2, \mathcal E(i))$. If we apply the global section functor to the short exact sequence $$0\rightarrow \mathcal E \rightarrow \mathcal F \rightarrow \mathcal G \rightarrow 0$$ we obtain the long exact sequence 
  \begin{equation}\label{LESCohomology}
  0\rightarrow H^0_*(\mathbb P^2, \mathcal E) \rightarrow H^0_*(\mathbb P^2, \mathcal F) \rightarrow H^0_*(\mathbb P^2, \mathcal G) \rightarrow H^1_*(\mathbb P^2, \mathcal E) \rightarrow  H^1_*(\mathbb P^2, \mathcal F) \rightarrow \dots.
  \end{equation}
  Note that $H^1_*(\mathbb P^2, \mathcal F)=0$ since $\mathcal F=\mathcal O_{\mathbb P^2}(-a_i)$ and that (\ref{LESCohomology}) is actually a recovery of (\ref{SESModule}). In particular, we have $$H^1_*(\mathbb P^2, \mathcal E)=\mathbf M.$$
In general, finite length $R=\mathbb K[x,y,z]$-modules that can be expressed as cokernels of maps of the form $\phi:\oplus_{i=1}^{n+2} R(-a_i) \rightarrow\oplus_{i=1}^{n} R(-b_i)$ correspond to finite length modules of the form $H^1_*(\mathbb P^2, \mathcal E)$ where $\mathcal E$ is a rank 2 locally free sheaf on $\mathbb P^2$.

\section{Main Results}
In this section, we collect the key definitions and theorems that form the heart of the paper. Many of the needed tools can be found in the book by Hartshorne on {\it Algebraic Geometry} \cite{hartshorne2013algebraic} and in the book by Okonek, Schneider, and Spindler on {\it Vector Bundles on Complex Projective Spaces} \cite{okonek1980vector}.

\begin{definition} Let $\mathcal E$ be a torsion free sheaf on $\mathbb P^n$. Let $c_1(\mathcal E)$ denote the first Chern class of $\mathcal E$ and let $rank(\mathcal E)$ denote its rank.
\begin{itemize}
\item[1)] The {\it slope} of $\mathcal E$ is the rational number $\mu(\mathcal E)=c_1(\mathcal E)/rank(\mathcal E)$
\item[2)]  $\mathcal E$ is said to be {\it stable} if for any non-zero subsheaf $\mathcal F\subset \mathcal E$ the slopes satisfy $\mu(\mathcal F)<\mu(\mathcal E)$
\item[3)] $\mathcal E$ is said to be {\it semistable} if for any non-zero subsheaf $\mathcal F\subset \mathcal E$ the slopes satisfy $\mu(\mathcal F)\leq\mu(\mathcal E)$
\item[4)] $\mathcal E$ is {\it unstable} if it is not semistable.
\end{itemize}
\end{definition}

In various contexts, the definition of stability given above is sometimes referred to by other names including slope stability, $\mu$-stability, Mumford stability, or Mumford-Takemoto stability. 

Let $\mathcal E$ be a vector bundle on $\mathbb P^n$ and let $r$ denote the rank of $\mathcal E$. We say that $\mathcal E$ is a {\it normalized} bundle if $c_1(\mathcal E)\in \{-r+1,\dots, 0\}$. In general, there exists a unique $a\in\mathbb Z$ such that $\mathcal E(a)$ is a normalized bundle. In particular, if $\mathcal E$ is a normalized rank 2 vector bundle, then $c_1(\mathcal E)\in \{-1,0\}$. The following lemma is a quick application of the definition of stability, see \cite{okonek1980vector} (Chapter II for a more detailed discussion of stability and  Lemma 1.2.5 on Pg 166-167 for the statement and proof of the lemma).

\begin{lemma}
Let $\mathcal E$ be an normalized rank 2 vector bundle on $\mathbb P^n$.
\begin{itemize}
\item[1)] $\mathcal E$ is stable if and only if $H^0(\mathbb P^n, \mathcal E)=0$.
\item[2)]  If $c_1(\mathcal E)=-1$ then $\mathcal E$ is semistable if and only if $\mathcal E$ is stable
\item[3)]  If $c_1(\mathcal E)=0$ then $\mathcal E$ is semistable if and only if $H^0(\mathbb P^n, \mathcal E(-1))=0$.
\end{itemize}
\end{lemma}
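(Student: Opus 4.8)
The plan is to collapse all three statements into a single numerical inequality. The first step is to reduce (semi)stability of a rank $2$ bundle to a condition involving only rank $1$ subsheaves. It suffices to test saturated proper subsheaves $\mathcal F\subset\mathcal E$; since $\operatorname{rank}\mathcal E=2$ such an $\mathcal F$ has rank $1$, and since $\mathcal E/\mathcal F$ is then torsion free and $\mathcal E$ is locally free, $\mathcal F$ is reflexive (the kernel of a map from a reflexive sheaf to a torsion free sheaf is reflexive), hence a line bundle $\mathcal O_{\mathbb P^n}(d)$ because rank $1$ reflexive sheaves on the smooth variety $\mathbb P^n$ are invertible. A nonzero morphism $\mathcal O_{\mathbb P^n}(d)\to\mathcal E$ is the same datum as a nonzero element of $H^0(\mathbb P^n,\mathcal E(-d))$ and is automatically injective, and passing from an arbitrary rank $1$ subsheaf to its saturation cannot lower the slope; so, writing
\[
d_{\max}(\mathcal E):=\max\{\, d\in\mathbb Z \;:\; H^0(\mathbb P^n,\mathcal E(-d))\neq 0 \,\}
\]
(a well-defined integer: the set is nonempty by Serre's theorem and bounded above since slopes of subsheaves of a fixed bundle are bounded, cf. \cite{okonek1980vector}), the supremum of $\mu(\mathcal F)$ over rank $1$ subsheaves $\mathcal F\subset\mathcal E$ equals $d_{\max}(\mathcal E)$ and is attained. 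Hence $\mathcal E$ is semistable iff $d_{\max}(\mathcal E)\le\mu(\mathcal E)$ and stable iff $d_{\max}(\mathcal E)<\mu(\mathcal E)$, where $\mu(\mathcal E)=c_1(\mathcal E)/2$.

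The second step is two bookkeeping observations. Tensoring the injection $\mathcal O_{\mathbb P^n}(-1)\hookrightarrow\mathcal O_{\mathbb P^n}$ given by multiplication by a linear form with the locally free sheaf $\mathcal E(-d+1)$ shows that $H^0(\mathbb P^n,\mathcal E(-d))\to H^0(\mathbb P^n,\mathcal E(-d+1))$ is injective; iterating this, $H^0(\mathbb P^n,\mathcal E)=0$ forces $H^0(\mathbb P^n,\mathcal E(-d))=0$ for every $d\ge 0$, so
\[
d_{\max}(\mathcal E)\le -1 \iff H^0(\mathbb P^n,\mathcal E)=0, \qquad d_{\max}(\mathcal E)\le 0 \iff H^0(\mathbb P^n,\mathcal E(-1))=0.
\]
Secondly, for a normalized rank $2$ bundle $\mu(\mathcal E)=c_1(\mathcal E)/2\in\{-\tfrac12,\,0\}$ while $d_{\max}(\mathcal E)$ is an integer, so the strict and non-strict inequalities against $\mu(\mathcal E)$ degenerate: if $c_1(\mathcal E)=0$ then $d_{\max}(\mathcal E)<\mu(\mathcal E)$ means $d_{\max}(\mathcal E)\le -1$ whereas $d_{\max}(\mathcal E)\le\mu(\mathcal E)$ means $d_{\max}(\mathcal E)\le 0$, and if $c_1(\mathcal E)=-1$ then both $d_{\max}(\mathcal E)<-\tfrac12$ and $d_{\max}(\mathcal E)\le-\tfrac12$ mean $d_{\max}(\mathcal E)\le -1$.

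Assembling the pieces then gives the lemma. For (1): in both normalizations, stability is equivalent to $d_{\max}(\mathcal E)\le -1$, hence to $H^0(\mathbb P^n,\mathcal E)=0$. For (2): when $c_1(\mathcal E)=-1$ both semistability and stability are equivalent to $d_{\max}(\mathcal E)\le -1$, so the two notions coincide. For (3): when $c_1(\mathcal E)=0$ semistability is equivalent to $d_{\max}(\mathcal E)\le 0$, hence to $H^0(\mathbb P^n,\mathcal E(-1))=0$.

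The main obstacle is entirely in the first step: showing that $d_{\max}(\mathcal E)$ is finite and that it really is the supremum of the slopes of the rank $1$ subsheaves of $\mathcal E$, which relies on the identification of such subsheaves (after saturation) with line bundles on $\mathbb P^n$ and on the fact that saturation does not decrease slope. Once that reduction is secured, the remainder is just parity bookkeeping with $c_1\in\{-1,0\}$, and one could alternatively simply cite \cite{okonek1980vector}, Ch.~II, Lemma~1.2.5.
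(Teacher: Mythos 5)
Your proof is correct, and it is essentially the argument the paper itself leaves out: the paper gives no proof of this lemma, citing \cite{okonek1980vector} (Ch.~II, Lemma~1.2.5), and your route --- reduce to saturated rank-one subsheaves, identify them with line bundles $\mathcal O_{\mathbb P^n}(d)$ whose inclusions correspond to sections of twists of $\mathcal E$, note saturation does not lower slope, then compare the integer $d_{\max}$ with the slope $c_1(\mathcal E)/2\in\{0,-\tfrac12\}$ --- is precisely that standard proof. No gaps beyond the standard facts you explicitly invoke (finiteness of $d_{\max}$ and the slope bound for subsheaves), which are appropriately cited.
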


The following is the Grauert-M\"ulich Theorem for rank 2 bundles on $\mathbb P^n$. For a more detailed discussion of the Grauert-M\"ulich theorem and its role in the classification of vector bundles, see \cite{grauert1975vektorbundel} for the original result or see \cite{okonek1980vector} (Chapter II, section 2 for a general discussion of the splitting behavior of vector bundle and Corollary 2 on Pg 206 for the specifics of the Grauert-M\"ulich theorem).

\begin{proposition} \label{GM}
Let $\mathcal E$ be a semistable, normalized, rank 2 vector bundle on $\mathbb P^n$. Let $L$ be a general line.
\begin{itemize}
\item[1)]  If $c_1(\mathcal E)=0$ then the restriction to $L$ splits as $\mathcal E|_L \cong \mathcal O_{\mathbb P^1}\oplus \mathcal O_{\mathbb P^1}$.
\item[2)] If $c_1(\mathcal E)=-1$ then the restriction to $L$ splits as $\mathcal E|_L \cong \mathcal O_{\mathbb P^1}(-1)\oplus \mathcal O_{\mathbb P^1}$.
\end{itemize}
\end{proposition}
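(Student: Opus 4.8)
The plan is to deduce the statement from the general Grauert--M\"ulich restriction theorem, together with a short parity argument. Recall first that every vector bundle on $\mathbb P^1$ splits as a direct sum of line bundles, so for any line $L\subset\mathbb P^n$ we may write $\mathcal E|_L\cong\mathcal O_{\mathbb P^1}(a_1)\oplus\mathcal O_{\mathbb P^1}(a_2)$ with $a_1\ge a_2$; for $L$ general the pair $(a_1,a_2)$ is independent of $L$, and is the generic splitting type of $\mathcal E$. Taking determinants, $\mathcal O_{\mathbb P^1}(a_1+a_2)\cong\det(\mathcal E|_L)\cong(\det\mathcal E)|_L\cong\mathcal O_{\mathbb P^1}(c_1(\mathcal E))$, so that $a_1+a_2=c_1(\mathcal E)$.

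The first step is to invoke the general Grauert--M\"ulich theorem \cite{grauert1975vektorbundel,okonek1980vector}: for a semistable bundle on $\mathbb P^n$ the generic splitting type satisfies $a_1-a_2\le 1$, so with $a_1\ge a_2$ the only possibilities are $a_1-a_2\in\{0,1\}$. The second step is the observation that $a_1-a_2$ has the same parity as $a_1+a_2=c_1(\mathcal E)$. Hence if $c_1(\mathcal E)=0$ then $a_1-a_2$ is even, so $a_1-a_2=0$ and $(a_1,a_2)=(0,0)$; and if $c_1(\mathcal E)=-1$ then $a_1-a_2$ is odd, so $a_1-a_2=1$ and $(a_1,a_2)=(0,-1)$. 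These are precisely the two splitting types asserted, and the argument is uniform in $n$.

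The substantive point, and the one I expect to be the main obstacle, is the inequality $a_1-a_2\le 1$. The approach I would take is the classical one on the incidence variety $F=\{(x,L):x\in L\}\subset\mathbb P^n\times\mathbb G$, where $\mathbb G$ is the Grassmannian of lines, with its projections $p\colon F\to\mathbb P^n$ (a $\mathbb P^{n-1}$-bundle) and $q\colon F\to\mathbb G$ (a $\mathbb P^1$-bundle whose fibres are the lines themselves). Suppose $a_1-a_2\ge 2$; since $\mathcal E$ is normalized this forces $a_1\ge 1$. Over the open locus $U\subset\mathbb G$ of lines of generic splitting type, the maximal sub-line-bundle $\mathcal O_{\mathbb P^1}(a_1)\subset\mathcal E|_L$, which is unique because $a_1>a_2$, varies algebraically in $L$ and saturates to a line subbundle $\mathcal L\subset p^*\mathcal E$ on all of $F$; in $\operatorname{Pic}(F)\cong\mathbb Z\cdot p^*\mathcal O_{\mathbb P^n}(1)\oplus\mathbb Z\cdot q^*\mathcal O_{\mathbb G}(1)$ it must be $\mathcal L\cong p^*\mathcal O_{\mathbb P^n}(a_1)\otimes q^*\mathcal O_{\mathbb G}(\beta)$ for some $\beta\in\mathbb Z$. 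Restricting $\mathcal L$ to a general fibre of $p$ shows $\beta\le 0$, and restricting the rank-one quotient $p^*\mathcal E/\mathcal L$ to a general fibre shows $\beta\ge 0$ when $n\ge 3$ (using that two hypersurfaces in $\mathbb P^{n-1}$ always meet), so $\beta=0$; then $\mathcal L=p^*\mathcal O_{\mathbb P^n}(a_1)$, the inclusion $\mathcal L\hookrightarrow p^*\mathcal E$ descends to a nonzero map $\mathcal O_{\mathbb P^n}(a_1)\to\mathcal E$, and this exhibits a sub-line-bundle of slope $a_1\ge 1>\tfrac12 c_1(\mathcal E)=\mu(\mathcal E)$, contradicting semistability. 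The delicate case is $n=2$, the one actually needed here: the quotient computation again only yields $\beta\le 0$, and ruling out $\beta<0$ requires the finer analysis of $p^*\mathcal E$ along the $q$-fibres via the relative Euler sequence on $F$, which is precisely the content carried out in \cite{okonek1980vector}; I would cite that reference for this step rather than reproduce it.
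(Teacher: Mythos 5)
Your argument is correct and matches the paper's treatment: the paper gives no proof of this proposition at all, simply citing Grauert--M\"ulich and Okonek--Schneider--Spindler (Chapter II, Corollary 2, p.~206), and that corollary is obtained exactly as in your first two paragraphs --- the general restriction inequality $a_1-a_2\le 1$ for semistable bundles in characteristic zero, combined with $a_1+a_2=c_1(\mathcal E)$ and the parity observation. Your closing sketch of the incidence-variety proof of the inequality itself is not required for the proposal (and its claim that restricting the quotient forces $\beta\ge 0$ for $n\ge 3$ is looser than the standard second-fundamental-form/descent argument), but since you defer that content to the same reference the paper cites, nothing essential is missing.
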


\begin{definition}
If $\mathcal E$ is an unstable, normalized, rank 2 vector bundle on $\mathbb P^n$ then the largest $a$ such that $H^0(\mathbb P^n, \mathcal E(-a))\neq 0$ is called the {\it index of instability} of $\mathcal E$. 
\end{definition}

From the above lemma, if $\mathcal E$ is an unstable, normalized, rank 2 vector bundle on $\mathbb P^n$ and $c_1(\mathcal E)=0$ then its index of instability is greater than zero. Similarly, if $c_1(\mathcal E)=-1$ then its index of instability is at least zero. If $\mathcal E$ is a vector bundle on $\mathbb P^2$, we can make a stronger statement:

\begin{proposition} \label{bigprop}
Let $\mathcal E$ be an unstable, normalized, rank 2 vector bundle on $\mathbb P^2$. Let $k$ be the index of instability of $\mathcal E$. Let $L$ be a general line in $\mathbb P^2$. Then
\begin{itemize}
\item[1)] Every nonzero section $s\in H^0(\mathbb P^2, \mathcal E(-k))$ is regular.
\item[2)] If $c_1(\mathcal E)=0$ then $k>0$ and $\mathcal E|_L=\mathcal O_{\mathbb P^1}(-k)\oplus \mathcal O_{\mathbb P^1}(k)$.
\item[3)] If $c_1(\mathcal E)=-1$ then $k\geq 0$ and $\mathcal E|_L=\mathcal O_{\mathbb P^1}(-k-1)\oplus \mathcal O_{\mathbb P^1}(k)$.
\end{itemize}
\end{proposition}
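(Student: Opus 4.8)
The plan is to use the maximality built into the definition of the index of instability to extract a \emph{regular} section of $\mathcal{E}(-k)$, attach to it the Koszul resolution of its zero scheme, and restrict that resolution to a general line.

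First I would establish~1). Let $0\neq s\in H^0(\mathbb{P}^2,\mathcal{E}(-k))$. If $s$ were not regular, its zero scheme would contain a curve $D$ of some degree $d\geq 1$; in a local trivialization of the rank-two bundle $\mathcal{E}(-k)$ near a general point of $D$, both coordinates of $s$ lie in the principal prime ideal cutting out $D$, so $s$ is divisible by a defining form $f$ of $D$, and $s/f$ is then a nonzero global section of $\mathcal{E}(-k-d)$. As $k+d>k$, this contradicts the maximality of $k$, so $Z:=Z(s)$ has codimension $\geq 2$ on the surface $\mathbb{P}^2$; that is, $Z$ is zero-dimensional (possibly empty) and $s$ is regular.

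For~2) and~3), I would attach to such a regular section the Koszul short exact sequence
\[
0\longrightarrow \mathcal{O}_{\mathbb{P}^2}\longrightarrow \mathcal{E}(-k)\longrightarrow \mathcal{I}_Z\otimes\det\!\bigl(\mathcal{E}(-k)\bigr)\longrightarrow 0,
\]
and twist by $\mathcal{O}_{\mathbb{P}^2}(k)$, using $\det(\mathcal{E}(-k))=\mathcal{O}_{\mathbb{P}^2}(c_1(\mathcal{E})-2k)$, to get
\[
0\longrightarrow \mathcal{O}_{\mathbb{P}^2}(k)\longrightarrow \mathcal{E}\longrightarrow \mathcal{I}_Z\bigl(c_1(\mathcal{E})-k\bigr)\longrightarrow 0.
\]
Since $Z$ is finite, a general line $L$ avoids $Z$, so $\mathcal{I}_Z$ coincides with $\mathcal{O}_{\mathbb{P}^2}$ on a neighbourhood of $L$; in particular the relevant $\mathrm{Tor}$ sheaf vanishes and restriction to $L$ produces
\[
0\longrightarrow \mathcal{O}_{\mathbb{P}^1}(k)\longrightarrow \mathcal{E}|_L\longrightarrow \mathcal{O}_{\mathbb{P}^1}\bigl(c_1(\mathcal{E})-k\bigr)\longrightarrow 0.
\]
This extension is classified by $\mathrm{Ext}^1_{\mathbb{P}^1}\bigl(\mathcal{O}_{\mathbb{P}^1}(c_1-k),\mathcal{O}_{\mathbb{P}^1}(k)\bigr)\cong H^1\bigl(\mathbb{P}^1,\mathcal{O}_{\mathbb{P}^1}(2k-c_1)\bigr)$. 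In~2) we have $c_1=0$ and, by the remark just above, $k>0$, so the exponent equals $2k\geq 0$; in~3) we have $c_1=-1$ and $k\geq 0$, so the exponent equals $2k+1\geq 1$. In both cases this $H^1$ vanishes, the sequence splits, and $\mathcal{E}|_L\cong\mathcal{O}_{\mathbb{P}^1}(k)\oplus\mathcal{O}_{\mathbb{P}^1}(c_1-k)$, which is $\mathcal{O}_{\mathbb{P}^1}(-k)\oplus\mathcal{O}_{\mathbb{P}^1}(k)$ when $c_1=0$ and $\mathcal{O}_{\mathbb{P}^1}(-k-1)\oplus\mathcal{O}_{\mathbb{P}^1}(k)$ when $c_1=-1$. (If $Z=\emptyset$ the identical computation carried out on $\mathbb{P}^2$ already shows $\mathcal{E}$ splits, consistently with the above.)

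I expect the only real obstacle to be~1): converting ``$k$ is the largest twist admitting a section'' into honest regularity of the section means excluding a divisorial component in its zero locus, which is exactly what the divisibility step does. After that, 2) and~3) are a routine Koszul-and-cohomology computation, the inequalities $k>0$ (resp.\ $k\geq 0$) from the stability lemma being precisely what forces the restricted extension to split; the one bit of bookkeeping is the $\mathrm{Tor}$-vanishing that keeps the restriction of the Koszul sequence to a general line short exact.
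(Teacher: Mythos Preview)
Your proof is correct and follows essentially the same route as the paper's: both use maximality of $k$ to rule out a divisorial component in the zero locus of a section of $\mathcal{E}(-k)$, then restrict the resulting section sequence to a general line avoiding the finite zero scheme and compute the relevant $\mathrm{Ext}^1$ on $\mathbb{P}^1$ to split. The only cosmetic difference is that the paper writes the quotient abstractly as $\mathcal{Q}$ and identifies its restriction by Chern-class bookkeeping, whereas you name it explicitly as $\mathcal{I}_Z(c_1(\mathcal{E})-k)$ via the Koszul resolution and invoke $\mathrm{Tor}$-vanishing to justify exactness of the restriction; these are the same argument.
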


\begin{proof}
Let $s$ be a nonzero section in $H^0(\mathbb P^2, \mathcal E(-k))$. Using $s$ we can build a short exact sequence of sheaves $$0\rightarrow \mathcal O_{\mathbb P^2} \rightarrow \mathcal E(-k) \rightarrow \mathcal Q(-k) \rightarrow 0$$ which we can twist by $\mathcal O_{\mathbb P^2}(k)$ to get the short exact sequence of sheaves $$0\rightarrow \mathcal O_{\mathbb P^2}(k) \rightarrow \mathcal E \rightarrow \mathcal Q \rightarrow 0.$$
If $s$ is not regular (i.e. its vanishing locus is not of codimension 2 or greater), then the vanishing locus of $s$ contains a curve component. This curve is of codimension 1 in $\mathbb P^2$ thus can be identified with a form $F\in R$. If we factor out $F$ from $s$ we obtain a nonzero section $s^{\prime} \in H^0(\mathbb P^2, \mathcal E(-k-d))$ where $d$ is the degree of $F$ (see \cite{barth1977some}, Lemma 2 on page 128). Since $k$ is the largest integer such that $H^0(\mathbb P^2, \mathcal E(-k))\neq 0$, we get a contradiction. Therefore $s$ is regular.

Suppose first that $c_1(\mathcal E)=0$. If $L=\mathbb P^1$ is a general line in $\mathbb P^2$ then $L$ does not meet the zero locus of $s$. As a consequence, the restriction of the short exact sequence to $L$ is still a short exact sequence and by Chern class considerations, the restriction of $\mathcal Q$ to $L$ is $\mathcal O_{\mathbb P^1}(-k)$. Thus, restricting the exact sequence to $L$ leads to 
$$0\rightarrow \mathcal O_{\mathbb P^1}(k) \rightarrow \mathcal E|_{\mathbb P^1} \rightarrow \mathcal O_{\mathbb P^1}(-k)\rightarrow 0.$$
Since $\mathcal E$ has rank 2, is unstable, and has $c_1=0$, we know that $H^0(\mathbb P^2, \mathcal E(-1))\neq 0$ thus we can conclude that $k>0$. Using this fact,  we can conclude that $Ext^1(\mathcal O_{\mathbb P^1}(-k),\mathcal O_{\mathbb P^1}(k))=0$.
As a consequence, $\mathcal E|_{\mathbb P^1}=\mathcal O_{\mathbb P^1}(-k) \oplus \mathcal O_{\mathbb P^1}(k)$.

Now suppose that $c_1(\mathcal E)=-1$. Like before, the restriction of the short exact sequence to $L$ is still a short exact sequence except now, by Chern class considerations, the restriction of $\mathcal Q$ to $L$ is $\mathcal O_{\mathbb P^1}(-k-1)$. Thus we get the short exact sequence $$0\rightarrow \mathcal O_{\mathbb P^1}(k) \rightarrow \mathcal E|_{\mathbb P^1} \rightarrow \mathcal O_{\mathbb P^1}(-k-1)\rightarrow 0.$$ Since $\mathcal E$ has rank 2, is unstable, and has $c_1=-1$, we know that $H^0(\mathbb P^2, \mathcal E)\neq 0$ thus we can conclude that $k\geq 0$. Using this fact,  we can conclude that $Ext^1(\mathcal O_{\mathbb P^1}(-k-1),\mathcal O_{\mathbb P^1}(k))=0$.
As a consequence, $\mathcal E|_{\mathbb P^1}=\mathcal O_{\mathbb P^1}(-k-1) \oplus \mathcal O_{\mathbb P^1}(k)$.
\end{proof}

\

\begin{proposition}\label{bigprop2}
If $\mathcal E$ is an unstable, normalized, rank 2 vector bundle on $\mathbb P^2$ with index of instability $k$
 then \begin{itemize}
 \item If $c_1(\mathcal E)=0$ then $h^0(\mathbb P^2, \mathcal E(t)) = {k+t+2\choose 2}$ for $ t<k$.
 \item If $c_1(\mathcal E)=-1$ then $h^0(\mathbb P^2, \mathcal E(t)) = {k+t+2\choose 2}$ for $ t\leq k$.
 \end{itemize}
\end{proposition}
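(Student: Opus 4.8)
The plan is to run the cohomology of the short exact sequence already furnished by the proof of Proposition~\ref{bigprop}. Fix a nonzero section $s\in H^0(\mathbb P^2,\mathcal E(-k))$; by Proposition~\ref{bigprop} it is regular, so its zero scheme $Z$ is a finite-length (codimension two) subscheme of $\mathbb P^2$, and the induced inclusion $\mathcal O_{\mathbb P^2}(k)\hookrightarrow\mathcal E$ has torsion-free cokernel. First I would pin down that cokernel: it is a rank one torsion-free sheaf whose reflexive hull is the line bundle $\mathcal O_{\mathbb P^2}(c_1(\mathcal E)-k)$ (comparing first Chern classes), so it is $\mathcal I_Z(c_1(\mathcal E)-k)$ for $\mathcal I_Z\subseteq\mathcal O_{\mathbb P^2}$ the ideal sheaf of $Z$. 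This gives
$$0\longrightarrow \mathcal O_{\mathbb P^2}(k)\longrightarrow \mathcal E\longrightarrow \mathcal I_Z(c_1(\mathcal E)-k)\longrightarrow 0,$$
which is exactly the sequence appearing in the proof of Proposition~\ref{bigprop} with its quotient made explicit (and consistent with the computation of $\mathcal Q|_L$ carried out there).

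Next I would twist by $\mathcal O_{\mathbb P^2}(t)$ and take global sections, obtaining the left-exact sequence
$$0\longrightarrow H^0(\mathbb P^2,\mathcal O_{\mathbb P^2}(k+t))\longrightarrow H^0(\mathbb P^2,\mathcal E(t))\longrightarrow H^0(\mathbb P^2,\mathcal I_Z(c_1(\mathcal E)-k+t)).$$
The one thing to check is that the right-hand term vanishes on the stated range. Since $\mathcal I_Z$ is a subsheaf of $\mathcal O_{\mathbb P^2}$, we have $H^0(\mathbb P^2,\mathcal I_Z(m))\hookrightarrow H^0(\mathbb P^2,\mathcal O_{\mathbb P^2}(m))$, and the latter is zero for $m<0$. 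Hence $H^0(\mathbb P^2,\mathcal E(t))\cong H^0(\mathbb P^2,\mathcal O_{\mathbb P^2}(k+t))$ as soon as $c_1(\mathcal E)-k+t<0$, and the dimension of the right side is $\binom{k+t+2}{2}$ — reading $\binom{n}{2}$ as $0$ for $n\le 1$, which also absorbs the subrange where $k+t<0$ and both sides vanish.

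Finally I would translate the inequality $c_1(\mathcal E)-k+t<0$ into the two cases: it reads $t<k$ when $c_1(\mathcal E)=0$, and $t\le k$ when $c_1(\mathcal E)=-1$, which are precisely the ranges in the statement. The only place where any real care is needed is the identification of the quotient sheaf as a twisted ideal sheaf — making sure no torsion intervenes (this is exactly regularity of $s$) and that the twist is the one forced by the first Chern class. Once Proposition~\ref{bigprop} supplies that regularity, the remainder is a single application of a cohomology long exact sequence together with the trivial bound $H^0(\mathcal I_Z(m))=0$ for $m<0$.
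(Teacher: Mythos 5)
Your argument is correct, but it takes a genuinely different route from the paper's. You use only part (1) of Proposition~\ref{bigprop}: the regular section $s\in H^0(\mathbb P^2,\mathcal E(-k))$ gives the exact sequence $0\to\mathcal O_{\mathbb P^2}(k)\to\mathcal E\to\mathcal I_Z(c_1(\mathcal E)-k)\to 0$ (the cokernel is rank one and torsion free exactly because $s$ has codimension two vanishing locus, and a rank one torsion free sheaf on a smooth surface embeds in its double dual, a line bundle determined by the first Chern class, with zero-dimensional quotient), and then the trivial vanishing $H^0(\mathcal I_Z(m))\subseteq H^0(\mathcal O_{\mathbb P^2}(m))=0$ for $m<0$ yields $h^0(\mathcal E(t))=h^0(\mathcal O_{\mathbb P^2}(k+t))=\binom{k+t+2}{2}$ precisely when $c_1(\mathcal E)-k+t<0$, i.e. $t<k$ resp.\ $t\le k$. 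The paper instead works with the restriction sequence $0\to\mathcal E(t-1)\to\mathcal E(t)\to\mathcal E(t)|_L\to 0$ for a general line $L$: it uses the splitting type of $\mathcal E|_L$ from parts (2)--(3) of Proposition~\ref{bigprop} to get the upper bound $h^0(\mathcal E(t))\le h^0(\mathcal E(t-1))+h^0(\mathcal E(t)|_L)$, obtains the lower bound $\binom{t+2}{2}\le h^0(\mathcal E(-k+t))$ by multiplying $s$ by forms of degree $t$, and squeezes the two inductively. Your route is shorter, requires no genericity of $L$ and no induction, and in fact yields the sharper identity $h^0(\mathcal E(t))=\binom{k+t+2}{2}+h^0(\mathcal I_Z(c_1(\mathcal E)-k+t))$ for every $t$, locating exactly where the formula fails; the paper's route has the advantage of staying entirely within the restriction-to-a-general-line framework and splitting-type data that it reuses in the proof of Theorem~\ref{main}, at the cost of the extra bookkeeping of the inductive squeeze.
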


\begin{proof} Consider the exact sequence $$0\rightarrow \mathcal E(t-1) \rightarrow \mathcal E(t) \rightarrow \mathcal E(t)|_L \rightarrow 0.$$ If we apply the global section functor we get the exact sequence $$0 \rightarrow H^0(\mathbb P^2, \mathcal E(t-1)) \rightarrow H^0(\mathbb P^2, \mathcal E(t)) \rightarrow H^0(\mathbb P^2, \mathcal E(t)|_L) \rightarrow \dots$$
From this exact sequence, we have $$h^0(\mathbb P^2, \mathcal E(t)) \leq h^0(\mathbb P^2, \mathcal E(t-1)) + h^0(\mathbb P^2, \mathcal E(t)|_L).$$
If $L$ is a general line then from the previous proposition we have that 

\

\begin{tikzcd}[row sep=5pt]
{\rm if} \ c_1(\mathcal E)=0 \ {\rm then} \ h^0(\mathbb P^2,\mathcal E(t)|_L) = h^0(\mathbb P^1, \mathcal O_{\mathbb P^1}(-k+t) \oplus \mathcal O_{\mathbb P^1}(k+t))\\
{\rm if} \ c_1(\mathcal E)=-1 \ {\rm then} \ h^0(\mathbb P^2,\mathcal E(t)|_L) = h^0(\mathbb P^1, \mathcal O_{\mathbb P^1}(-k-1+t) \oplus \mathcal O_{\mathbb P^1}(k+t))
\end{tikzcd}

\

 As a consequence
 
 \
 
 \begin{tikzcd}[row sep=5pt]
 {\rm if}\ c_1(\mathcal E)=0 \ {\rm and \ if} \ t<2k \ {\rm then} \ h^0(\mathbb P^2,\mathcal E(-k+t)|_L)=max\{0,t+1\}\\
 {\rm if}\ c_1(\mathcal E)=-1 \ {\rm and \ if} \ t\leq 2k \ {\rm then} \ h^0(\mathbb P^2,\mathcal E(-k+t)|_L)=max\{0,t+1\}
 \end{tikzcd}
 
 \
 
 Since there exists a nonzero section $s \in H^0(\mathbb P^2, \mathcal E(-k))$, we can tensor this section by forms of degree $t$ and produce sections in $H^0(\mathbb P^2, \mathcal E(-k+t))$. As a consequence, we have $$h^0(\mathbb P^2, \mathcal E(t))\geq {k+t+2\choose 2}  \ \ {\rm or\ equivalently}\ \ h^0(\mathbb P^2, \mathcal E(-k+t))\geq {t+2\choose 2}$$
We can now establish the claim of the proposition by an inductive approach. In the interest of space, we let $h^0(\mathcal E)$ denote $h^0(\mathbb P^2, \mathcal E)$. Recalling that $h^0(\mathcal E(-k-1))=0$ and that $h^0(\mathbb P^2,\mathcal E(-k+t)|_L)=t+1$ (provided $t$ is in the proper range) we have the following inequalities:

\

\begin{tikzcd}[row sep=5pt]
1\leq h^0(\mathcal E(-k+0)) \leq h^0( \mathcal E(-k-1)) + h^0(\mathcal E(-k+0)|_L)=0+1=1\\
3\leq h^0(\mathcal E(-k+1)) \leq h^0(\mathcal E(-k+0)) + h^0(\mathcal E(-k+1)|_L)=1+2=3\\
6\leq h^0(\mathcal E(-k+2)) \leq h^0( \mathcal E(-k+1)) + h^0( \mathcal E(-k+2)|_L)=3+3=6\\
\dots\\
{t+2\choose 2} \leq h^0( \mathcal E(-k+t)) \leq h^0(\mathcal E(-k+t-1)) + h^0(\mathcal E(-k+t)|_L)={t+1\choose 2}+t+1={t+2\choose 2} \\
\dots
\end{tikzcd}

\

\

For $c_1(\mathcal E)=0$, following the inequalities through one at a time leads to the constraint $${t+2\choose 2} \leq h^0( \mathcal E(-k+t))\leq {t+2\choose 2} \ \ {\rm for} \ \ t<2k$$ or equivalently $${k+t+2\choose 2} \leq h^0( \mathcal E(t))\leq {k+t+2\choose 2}  \ \ {\rm for} \ \ t<k.$$ Thus we conclude that $${\rm if} \ c_1(\mathcal E)=0\ {\rm then}\  h^0(\mathbb P^2, \mathcal E(t)) = {k+t+2\choose 2}\ {\rm for} \  t<k.$$

 In a similar manner, we can also conclude that $${\rm if} \ c_1(\mathcal E)=-1\ {\rm then}\  h^0(\mathbb P^2, \mathcal E(t)) = {k+t+2\choose 2}\ {\rm for} \  t\leq k.$$
\end{proof}

\

\begin{theorem}\label{main}
Let $\mathcal E$ be a normalized, rank 2, locally free sheaf on $\mathbb P^2$. Let $L\in \mathbb K[x,y,z]$ be a general linear form. Let $H^1_*(\mathbb P^2, \mathcal E)=\oplus_{t\in \mathbb Z} H^1(\mathbb P^2, \mathcal E(t))$. Let $\phi_L(t): H^1(\mathbb P^2, \mathcal E(t-1))\rightarrow H^1(\mathbb P^2, \mathcal E(t))$ be the linear map induced by $L$.

\

1) $H^1_*(\mathbb P^2, \mathcal E)$ has the Weak Lefschetz Property

\

2) Let $\mathcal E$ be stable.

\begin{itemize}
\item If $c_1(\mathcal E)=0$ then $\phi_L(t)$ is injective for $t\leq -1$ and surjective for $t\geq -1$
\item If $c_1(\mathcal E)=-1$ then $\phi_L(t)$ is injective for $t\leq -1$ and surjective for $t\geq 0$.
\end{itemize}

\

3) Let $\mathcal E$ be unstable with index of instability $k$.

\begin{itemize}
\item If $c_1(\mathcal E)=0$ then $\phi_L(t)$ is injective for $t\leq k-1$ and  surjective for $t\geq -k-1$
\item If $c_1(\mathcal E)=-1$ then $\phi_L(t)$ is injective for $t\leq k$ and surjective for $t\geq -k-1$
\end{itemize}

\end{theorem}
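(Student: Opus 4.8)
The plan is to fix a general line $L$ in $\mathbb{P}^2$, denote also by $L$ the corresponding linear form, and extract everything from the restriction sequence
\[
0 \longrightarrow \mathcal{E}(t-1) \xrightarrow{\;\cdot L\;} \mathcal{E}(t) \longrightarrow \mathcal{E}(t)|_L \longrightarrow 0 .
\]
Since $\mathcal{E}$ is locally free, $\cdot L$ is injective on $H^0_*(\mathbb{P}^2,\mathcal{E})$, so the associated long exact cohomology sequence identifies $\ker\phi_L(t)$ with the cokernel of the restriction map $H^0(\mathbb{P}^2,\mathcal{E}(t)) \to H^0(\mathcal{E}(t)|_L)$; hence $\phi_L(t)$ is injective if and only if $h^0(\mathcal{E}(t)) - h^0(\mathcal{E}(t-1)) = h^0(\mathcal{E}(t)|_L)$. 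Dually, $\operatorname{coker}\phi_L(t)$ embeds into $H^1(\mathcal{E}(t)|_L)$, so $\phi_L(t)$ is surjective whenever $H^1(\mathcal{E}(t)|_L) = 0$. Because $H^1_*(\mathbb{P}^2,\mathcal{E})$ has finite length and any linear map that is injective or surjective has maximal rank, statement 1) will follow as soon as we know that for every $t$ the map $\phi_L(t)$ lies either in the injectivity range or in the surjectivity range of 2)/3).

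Next I would set up a duality that converts injectivity into surjectivity. Serre duality on $\mathbb{P}^2$ together with the rank-two identity $\mathcal{E}^{\vee} \cong \mathcal{E}(-c_1(\mathcal{E}))$ gives isomorphisms $H^i(\mathbb{P}^2,\mathcal{E}(t))^{\vee} \cong H^{2-i}(\mathbb{P}^2,\mathcal{E}(-c_1-t-3))$, compatibly with multiplication by $L$, so the transpose of $\phi_L(t)$ is $\phi_L(-c_1-t-2)$; consequently $\phi_L(t)$ is injective if and only if $\phi_L(-c_1-t-2)$ is surjective. (The same equivalence can be obtained purely numerically by combining the Euler-characteristic identity $\chi(\mathcal{E}(t)) - \chi(\mathcal{E}(t-1)) = c_1 + 2t + 2$ with Serre duality on $\mathbb{P}^1$ applied to the splitting type of $\mathcal{E}(t)|_L$.) This reduces the entire theorem to the injectivity assertions in 2) and 3): each surjectivity assertion is the image of the corresponding injectivity assertion under $t \mapsto -c_1 - t - 2$, and in each of the four cases the resulting injectivity and surjectivity ranges together exhaust $\mathbb{Z}$, which then yields 1).

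To prove injectivity I would verify the numerical identity $h^0(\mathcal{E}(t)) - h^0(\mathcal{E}(t-1)) = h^0(\mathcal{E}(t)|_L)$ throughout each asserted range. For a general $L$, Proposition \ref{GM} (stable/semistable case) and Proposition \ref{bigprop} (unstable case) give the splitting type of $\mathcal{E}|_L$, so $h^0(\mathcal{E}(t)|_L)$ is an explicit piecewise-linear function of $t$ --- for instance $\max\{0,t-k+1\} + \max\{0,t+k+1\}$ when $\mathcal{E}$ is unstable with $c_1 = 0$. For the left-hand side: if $\mathcal{E}$ is stable then the Lemma gives $h^0(\mathcal{E}(t)) = 0$ for $t \le -1$ (when $c_1=0$) resp. $t \le 0$ (when $c_1=-1$); if $\mathcal{E}$ is unstable with index of instability $k$ then Proposition \ref{bigprop2} gives $h^0(\mathcal{E}(t)) = \binom{k+t+2}{2}$ for $t < k$ (when $c_1=0$) resp. $t \le k$ (when $c_1=-1$). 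Substituting and using $\binom{n}{2} - \binom{n-1}{2} = n-1$, one checks directly that the two sides agree on the asserted range in each case --- e.g. for $\mathcal{E}$ unstable with $c_1=0$ and $-k \le t \le k-1$ both sides equal $k+t+1$, while for $t \le -k-1$ both sides vanish. (The remaining case for 1), a semistable but non-stable bundle with $c_1=0$, has $H^0(\mathcal{E}(-1))=0$ by the Lemma, so the stable $c_1=0$ computation applies verbatim.) This step is a routine case-by-case verification.

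The step I expect to be the real obstacle is the surjectivity side in the unstable case. For $\mathcal{E}$ stable, surjectivity of $\phi_L(t)$ in the stated range is free from $H^1(\mathcal{E}(t)|_L) = 0$; but for $\mathcal{E}$ unstable the asserted surjectivity range $t \ge -k-1$ is strictly larger than the range $t \ge k-1$ on which $H^1(\mathcal{E}(t)|_L)$ vanishes, so a genuinely cohomological input is needed there. The duality of the second paragraph supplies exactly this. (Alternatively one can argue directly, using the Serre-construction sequence $0 \to \mathcal{O}_{\mathbb{P}^2}(k) \to \mathcal{E} \to \mathcal{I}_Z(c_1-k) \to 0$ with $Z$ zero-dimensional, that $h^1(\mathcal{E}(t))$ is constant for $-k-2 \le t \le k-1$, so an injective $\phi_L(t)$ in that range is automatically bijective.) Once injectivity and surjectivity are both in hand, assertions 1), 2) and 3) follow at once.
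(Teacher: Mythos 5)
Your proposal is correct and follows essentially the same route as the paper: restriction to a general line, the long exact sequence criteria for injectivity/surjectivity, Grauert--M\"ulich in the (semi)stable case, Propositions~\ref{bigprop} and~\ref{bigprop2} in the unstable case, and Serre duality together with $\mathcal E^\vee \cong \mathcal E(-c_1)$ to obtain the surjectivity ranges. Your only departures are cosmetic: you package the duality as the transpose identity $\phi_L(t)^\vee = \phi_L(-c_1-t-2)$ where the paper instead rewrites the numerical surjectivity criterion using its facts A)--C), and you explicitly dispose of the strictly semistable $c_1=0$ case, which the paper's stable/unstable dichotomy leaves tacit.
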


\begin{proof}
In order to prove the theorem, we will first prove 2) and 3) which immediately imply 1). 

\

Consider the short exact sequence of sheaves 
\begin{equation}\label{eq0}
0\rightarrow \mathcal E(t-1) \rightarrow \mathcal E(t) \rightarrow \mathcal E(t)|_L \rightarrow 0.
\end{equation}

 If we apply the global section functor we get the long exact sequence

\

\begin{tikzcd}
 0\arrow{r} & H^0(\mathbb P^2,\mathcal E(t-1))\arrow{r} &  H^0(\mathbb P^2,\mathcal E(t))\arrow{r} & H^0(\mathbb P^2,\mathcal E(t)|_L) \\
  {  }  \arrow{r} & H^1(\mathbb P^2,\mathcal E(t-1))\arrow{r} &  H^1(\mathbb P^2,\mathcal E(t))\arrow{r} & H^1(\mathbb P^2,\mathcal E(t)|_L) \\
  {  }  \arrow{r} & H^2(\mathbb P^2,\mathcal E(t-1))\arrow{r} &  H^2(\mathbb P^2,\mathcal E(t))\arrow{r} & H^2(\mathbb P^2,\mathcal E(t)|_L)=0
\end{tikzcd}

\medskip

To show that $H^1_*(\mathbb P^2, \mathcal E)$ has the Weak Lefschetz Property, we need to show that for each $t\in \mathbb Z$, the map $H^1(\mathbb P^2,\mathcal E(t-1)) \rightarrow H^1(\mathbb P^2,\mathcal E(t))$ is either injective or surjective. From the long exact sequence above, we have the following observations:

\begin{itemize}
\item The map is injective if and only if  $h^0(\mathbb P^2, \mathcal E(t-1)) - h^0(\mathbb P^2, \mathcal E(t)) + h^0(\mathbb P^2, \mathcal E(t)|_L)=0.$
\item The map is injective if $h^0(\mathbb P^2, \mathcal E(t)|_L)=0.$
\item The map is surjective if and only if  $h^1(\mathbb P^2, \mathcal E(t)|_L) - h^2(\mathbb P^2, \mathcal E(t-1)) + h^2(\mathbb P^2, \mathcal E(t))=0.$
\item The map is surjective if $h^1(\mathbb P^2, \mathcal E(t)|_L)=0.$
\end{itemize}

If the generic splitting type of $\mathcal E$  is $\mathcal O_{\mathbb P^1}(a) \oplus \mathcal O_{\mathbb P^1}(b)$ then, by Serre Duality, $h^1(\mathbb P^2,\mathcal E|_L) = h^0(\mathbb P^1, \mathcal O_{\mathbb P^1}(-a-2) \oplus \mathcal O_{\mathbb P^1}(-b-2))$ and $h^1(\mathbb P^2,\mathcal E(t)|_L) = h^0(\mathbb P^1, \mathcal O_{\mathbb P^1}(-a-t-2) \oplus \mathcal O_{\mathbb P^1}(-b-t-2))$. As a consequence, we can easily compute the value of $h^1(\mathbb P^2,\mathcal E(t)|_L)$. In particular, if $-a-t-2\leq -1$ and $-b-t-2\leq -1$ then $h^1(\mathbb P^2,\mathcal E(t)|_L)=0$. We collect the following facts:

\begin{itemize}
\item[A)] Since $\mathcal E$ is locally free on $\mathbb P^2$, by duality we have $h^2(\mathbb P^2, \mathcal E(t)) = h^0(\mathbb P^2, \mathcal E^\vee(-t-3))$. 
\item[B)] If we restrict $\mathcal E$ to a general line $L$ we have $h^1(\mathbb P^2,\mathcal E(t)|_L)=h^0(\mathbb P^2,\mathcal E^\vee(-t-2)|_L)$.
\item[C)] Since $\mathcal E$ has rank two, if $c_1(\mathcal E)=0$ then $\mathcal E^\vee \cong \mathcal E$ and if $c_1(\mathcal E)=-1$ then $\mathcal E^\vee \cong \mathcal E(1)$. 
\end{itemize}

We now assume that $\mathcal E$ is stable and use the above considerations to establish a range of values of $t$ where the map, $\phi_L(t): H^1(\mathbb P^2,\mathcal E(t-1)) \rightarrow H^1(\mathbb P^2,\mathcal E(t))$, is injective and a range of values where the map is surjective. It is important to note that the following shows that for {\it every} value of $t$, the map is either injective or surjective.

\

Suppose $\mathcal E$ is stable and that $c_1(\mathcal E)=0$. By Proposition~\ref{GM}, $\mathcal E$ splits on $L$ as $\mathcal O_{\mathbb P^1}\oplus \mathcal O_{\mathbb P^1}$. In this case, $h^0(\mathbb P^2, \mathcal E(t)|_L)=0$ for $t\leq -1$ and $h^1(\mathbb P^2, \mathcal E(t)|_L)=0$ for $t\geq -1$. Thus $\phi_L(t)$ is injective for $t\leq -1$ and surjective for $t\geq -1$.

\

 Suppose $\mathcal E$ is stable and that $c_1(\mathcal E)=-1$. By Proposition~\ref{GM}, $\mathcal E$ splits on $L$ as $\mathcal O_{\mathbb P^1}(-1)\oplus \mathcal O_{\mathbb P^1}$. In this case, $h^0(\mathbb P^2, \mathcal E(t)|_L)=0$ for $t\leq -1$ and $h^1(\mathbb P^2, \mathcal E(t)|_L)=0$ for $t\geq 0$.

\

 Suppose $\mathcal E$ is unstable and that $c_1(\mathcal E)=0$. If the index of instability is $k$ then by Proposition~\ref{bigprop}, $k>0$ and $\mathcal E|_L=\mathcal O_{\mathbb P^1}(-k)\oplus \mathcal O_{\mathbb P^1}(k)$. In this case, Proposition~\ref{bigprop2} allows us to conclude that $$h^0(\mathbb P^2, \mathcal E(t-1)) - h^0(\mathbb P^2, \mathcal E(t)) + h^0(\mathbb P^2, \mathcal E(t)|_L)=0 \ \ {\rm for} \ \ t\leq k-1.$$ This implies that $\phi_L(t)$ is injective for $t\leq k-1$. Using A) and B) above, we note that $$h^1(\mathbb P^2, \mathcal E(t)|_L) - h^2(\mathbb P^2, \mathcal E(t-1)) + h^2(\mathbb P^2, \mathcal E(t))$$ can be expressed as $$h^0(\mathbb P^2, \mathcal E^\vee(-t-2)|_L) - h^0(\mathbb P^2, \mathcal E^\vee(-t-2))+ h^0(\mathbb P^2, \mathcal E^\vee(-t-3)).$$ Using C) and rearranging, we can then express this as $$h^0(\mathbb P^2, \mathcal E(-t-3) - h^0(\mathbb P^2, \mathcal E(-t-2))+ h^0(\mathbb P^2, \mathcal E(-t-2)|_L).$$ By Proposition~\ref{bigprop2} this quantity is equal to $0$ for $-t-2\leq k-1$. In other words, $\phi_L(t)$ is surjective for $-k-1\leq t$.   

\

Suppose $\mathcal E$ is unstable and that $c_1(\mathcal E)=-1$. If the index of instability is $k$ then by Proposition~\ref{bigprop}, $k\geq 0$ and $\mathcal E|_L=\mathcal O_{\mathbb P^1}(-k-1)\oplus \mathcal O_{\mathbb P^1}(k)$. In this case, Proposition~\ref{bigprop2} allows us to conclude that  $$h^0(\mathbb P^2, \mathcal E(t-1)) - h^0(\mathbb P^2, \mathcal E(t)) + h^0(\mathbb P^2, \mathcal E(t)|_L)=0 \ \ {\rm for} \ \  t\leq k.$$ This implies that $\phi_L(t)$ is injective for $t\leq k$. Using A) and B) above, we note that $$h^1(\mathbb P^2, \mathcal E(t)|_L) - h^2(\mathbb P^2, \mathcal E(t-1)) + h^2(\mathbb P^2, \mathcal E(t))$$ can be expressed as $$h^0(\mathbb P^2, \mathcal E^\vee(-t-2)|_L) - h^0(\mathbb P^2, \mathcal E^\vee(-t-2))+ h^0(\mathbb P^2, \mathcal E^\vee(-t-3)).$$ Using C) and rearranging, we can then express this as $$h^0(\mathbb P^2, \mathcal E(-t-2) - h^0(\mathbb P^2, \mathcal E(-t-1))+ h^0(\mathbb P^2, \mathcal E(-t-1)|_L).$$ By Proposition~\ref{bigprop2} this quantity is equal to $0$ for $-t-1\leq k$. In other words, $\phi_L(t)$ is surjective for $-k-1\leq t$.   

\

In each of these cases, we see that for each $t\in \mathbb Z$, the map $H^1(\mathbb P^2,\mathcal E(t-1)) \rightarrow H^1(\mathbb P^2,\mathcal E(t))$ is either injective or surjective. Thus $H^1_*(\mathbb P^2, \mathcal E)$ has the Weak Lefschetz Property for any rank 2 vector bundle $\mathcal E$ on $\mathbb P^2$.
\end{proof}

\

\begin{corollary} If $F_1,F_2,F_3$ is a regular sequence in $R=\mathbb K[x,y,z]$ then $R/(F_1,F_2,F_3)$ has the Weak Lefschetz Property.
\end{corollary}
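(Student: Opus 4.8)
The plan is to recognize $R/(F_1,F_2,F_3)$ as the first cohomology module of a rank $2$ locally free sheaf on $\mathbb P^2$ and then quote Theorem~\ref{main}. First I would observe that, since $F_1,F_2,F_3$ is a regular sequence in the three-dimensional ring $R$, the quotient $R/(F_1,F_2,F_3)$ has Krull dimension zero and hence is a finite length graded $R$-module. Writing $d_i=\deg F_i$, consider the degree-zero graded map $\phi\colon \bigoplus_{i=1}^{3} R(-d_i)\to R$ given by the row $(F_1,F_2,F_3)$, so that $\operatorname{coker}\phi=\mathbf M=R/(F_1,F_2,F_3)$. This is exactly the setting of Section~2 with $n=1$, $\mathbf F=\bigoplus_{i=1}^{3}R(-d_i)$ and $\mathbf G=R$, and because $\mathbf M$ has finite length the associated Buchsbaum-Rim complex $(\ref{BRModule})$ is exact; in this special case it is nothing but the Koszul complex on $F_1,F_2,F_3$.

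Next I would sheafify to obtain the exact sequence $(\ref{BRSheaf})$ of locally free sheaves and split it as in $(\ref{SES1})$ and $(\ref{SES2})$. Here $\mathcal E$ is the kernel of the map $\bigoplus_i\mathcal O_{\mathbb P^2}(-d_i)\to\mathcal O_{\mathbb P^2}$, which is surjective as a map of sheaves precisely because a regular sequence of length three in $R$ has empty zero locus in $\mathbb P^2$; hence $\mathcal E$ is locally free of rank $2$. Applying the global section functor to $(\ref{SES2})$ as in $(\ref{LESCohomology})$, and using $H^1_*(\mathbb P^2,\mathcal F)=0$ because $\mathcal F$ is a direct sum of line bundles on $\mathbb P^2$, one recovers $(\ref{SESModule})$ and concludes $H^1_*(\mathbb P^2,\mathcal E)=\mathbf M=R/(F_1,F_2,F_3)$.

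Finally I would reduce to the normalized case: let $a\in\mathbb Z$ be the unique integer with $\mathcal E(a)$ normalized. The graded modules $H^1_*(\mathbb P^2,\mathcal E)$ and $H^1_*(\mathbb P^2,\mathcal E(a))$ differ only by a shift of internal degree, and this identification intertwines the multiplication maps by a general linear form $L$, so one has the Weak Lefschetz Property if and only if the other does. By Theorem~\ref{main}, part~1), $H^1_*(\mathbb P^2,\mathcal E(a))$ has the Weak Lefschetz Property, and therefore so does $R/(F_1,F_2,F_3)$. I do not expect a genuine obstacle here: the only points that deserve comment are the exactness of the Koszul (equivalently Buchsbaum-Rim) complex, which is exactly the hypothesis that $F_1,F_2,F_3$ form a regular sequence, and the observation that the normalizing twist is harmless for the Weak Lefschetz Property.
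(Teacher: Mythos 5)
Your proposal is correct and follows exactly the route the paper intends: the corollary is the special case $n=1$ of the Buchsbaum--Rim setup of Section~2 (where the complex is the Koszul complex on $F_1,F_2,F_3$), so that $R/(F_1,F_2,F_3)\cong H^1_*(\mathbb P^2,\mathcal E)$ for a rank $2$ locally free sheaf $\mathcal E$, and Theorem~\ref{main} applies after the harmless normalizing twist, which you rightly note only shifts the internal grading.
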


\begin{corollary}  If $\mathcal E$ is a rank 2 vector bundle on $\mathbb P^2$ then $H^1_*(\mathbb P^2, \mathcal E)$ is unimodal.
\end{corollary}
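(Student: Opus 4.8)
The strategy is to derive unimodality from the sharp, case-by-case conclusions of Theorem~\ref{main} (parts 2) and 3)) rather than from the bare Weak Lefschetz Property. First I would reduce to the normalized case: an arbitrary rank $2$ bundle can be written $\mathcal E\cong\mathcal E_0(a)$ with $\mathcal E_0$ normalized, and $H^1_*(\mathbb P^2,\mathcal E)$ is then merely a shift of the grading of $H^1_*(\mathbb P^2,\mathcal E_0)$, so unimodality of one is equivalent to unimodality of the other. Assume therefore that $\mathcal E$ is normalized, fix a general linear form $L$, and put $d_t=\dim_{\mathbb K}H^1(\mathbb P^2,\mathcal E(t))$; since the module has finite length, $d_t=0$ for $|t|$ sufficiently large. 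The two elementary facts that drive the argument are that injectivity of $\phi_L(t)$ forces $d_{t-1}\le d_t$ while surjectivity of $\phi_L(t)$ forces $d_{t-1}\ge d_t$.

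Next I would record, from Theorem~\ref{main}, that in each of the four cases there are integers $p$ and $q$ with $q\le p+1$ such that $\phi_L(t)$ is injective for every $t\le p$ and surjective for every $t\ge q$: for $\mathcal E$ stable with $c_1(\mathcal E)=0$, take $(p,q)=(-1,-1)$; for $\mathcal E$ stable with $c_1(\mathcal E)=-1$, take $(p,q)=(-1,0)$; for $\mathcal E$ unstable with $c_1(\mathcal E)=0$ and index of instability $k$, take $(p,q)=(k-1,-k-1)$; and for $\mathcal E$ unstable with $c_1(\mathcal E)=-1$ and index of instability $k$, take $(p,q)=(k,-k-1)$. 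The inequality $q\le p+1$ holds in each of these four cases, and it says precisely that the injectivity range $(-\infty,p]$ and the surjectivity range $[q,\infty)$ together exhaust $\mathbb Z$.

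Finally I would assemble the conclusion. By the first elementary fact, the sequence $(d_t)_{t\le p}$ is non-decreasing; by the second, the sequence $(d_t)_{t\ge q-1}$ is non-increasing. Because $q-1\le p$, these two ranges overlap on the interval $[q-1,p]$, on which $(d_t)$ is simultaneously non-decreasing and non-increasing and hence constant with some value $c$; moreover every integer lies in at least one of the two ranges. Thus $(d_t)$ increases monotonically up to index $q-1$, equals $c$ throughout $[q-1,p]$, and decreases monotonically from index $p$ onward, so $(d_t)$ is a unimodal sequence, which is exactly the statement that $H^1_*(\mathbb P^2,\mathcal E)$ is unimodal. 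There is essentially no obstacle beyond Theorem~\ref{main} itself; the one point worth flagging is that the Weak Lefschetz Property alone is not sufficient, since for a general finite length module the maximal-rank multiplication maps could a priori alternate between being injective and being surjective. What is needed, and what Theorem~\ref{main} supplies, is the monotone arrangement of the injectivity and surjectivity ranges together with the fact that they overlap.
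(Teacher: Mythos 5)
Your proof is correct and follows essentially the same route as the paper: the paper's own argument likewise extracts from the proof of Theorem~\ref{main} an integer threshold below which the multiplication maps are injective and above which they are surjective, and concludes unimodality from that. Your write-up merely makes explicit the overlap condition between the injectivity and surjectivity ranges and the harmless reduction to the normalized case, both of which the paper leaves implicit.
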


\begin{proof}  In the proof of Theorem~\ref{main}, we saw that for any rank 2 vector bundle $\mathcal E$ on $\mathbb P^2$, there exists an $r$ such that for $t<r$ the map $\times L: H^1(\mathbb P^2,\mathcal E(t-1)) \rightarrow H^1(\mathbb P^2,\mathcal E(t))$ is injective and for $t\geq r$ the map $\times L: H^1(\mathbb P^2,\mathcal E(t-1)) \rightarrow H^1(\mathbb P^2,\mathcal E(t))$ was surjective. This fact establishes unimodality.
\end{proof}

\section{An Example and Further Remarks}
In this section, we first give an example to illustrate the theorems of the paper and the structure of the Buchsbaum-Rim complexes. In each of the following two examples, the associated locally free sheaf is unstable. After giving the two examples, we conclude the paper with a few remarks and considerations for possible further research.

\begin{example} Consider a map $\phi: R(-7)\oplus R(-2)^3 \rightarrow R(-1)\oplus R$ whose cokernel is a finite length module $\mathbf M$. An elementary computation show that $\mathbf M=\mathbf M_0 \oplus \dots \oplus \mathbf M_9$ has Hilbert function $(1,4,6,7,7,7,7,6,4,1)$. The Buchsbaum-Rim complex associated to $\phi$ is:
\begin{equation}\label{exam}
0\rightarrow R(-12)\oplus R(-11) \rightarrow R(-10)^3 \oplus R(-5) \rightarrow R(-7)\oplus R(-2)^3 \rightarrow R(-1)\oplus R \rightarrow \mathbf M \rightarrow 0
\end{equation}

If we sheafify (\ref{exam}) and tensor by $\mathcal O_{\mathbb P^2}(6)$ we get the exact sequence
\begin{equation}\label{exam2}
0\rightarrow \mathcal O_{\mathbb P^2}(-6)\oplus \mathcal O_{\mathbb P^2}(-5) \rightarrow \mathcal O_{\mathbb P^2}(-4)^3 \oplus \mathcal O_{\mathbb P^2}(1) \rightarrow \mathcal O_{\mathbb P^2}(-1)\oplus \mathcal O_{\mathbb P^2}(4)^3 \rightarrow \mathcal O_{\mathbb P^2}(5)\oplus \mathcal O_{\mathbb P^2}(6) \rightarrow 0
\end{equation}

We can break (\ref{exam2}) into two short exact sequences
\begin{equation}\label{exam3}
0\rightarrow \mathcal O_{\mathbb P^2}(-6)\oplus \mathcal O_{\mathbb P^2}(-5) \rightarrow \mathcal O_{\mathbb P^2}(-4)^3 \oplus \mathcal O_{\mathbb P^2}(1) \rightarrow \mathcal E\rightarrow 0
\end{equation}

and

\begin{equation}\label{exam4}
0\rightarrow \mathcal E\rightarrow \mathcal O_{\mathbb P^2}(-1)\oplus \mathcal O_{\mathbb P^2}(4)^3 \rightarrow \mathcal O_{\mathbb P^2}(5)\oplus \mathcal O_{\mathbb P^2}(6) \rightarrow 0
\end{equation}
where $\mathcal E$ is a normalized rank 2 locally free sheaf with $c_1(\mathcal E)=0$. From the exact sequence (\ref{exam3}), we see that $H^0(\mathbb P^2, \mathcal E(-1))>0$ and $H^0(\mathbb P^2, \mathcal E(-2))=0$. Therefore $\mathcal E$ is unstable with index of instability $k=1$. By Theorem~\ref{main}, $\phi_L(t)$ is injective for $t\leq 0$ and  surjective for $t\geq -2$
This corresponds to saying that the map $\times L:\mathbf M_{d-1}\rightarrow \mathbf M_d$ is injective for $d\leq 6$ and surjective for $d\geq 4$. Note that this implies bijectivity for $4\leq d\leq 6$ thus $\mathbf M_3, \mathbf M_4, \mathbf M_5$ and $\mathbf M_6$ all have the same dimension. Further note that for every value of $d$, the map $\times L:\mathbf M_{d-1}\rightarrow \mathbf M_d$ is either injective or surjective thus $\mathbf M$ has the Weak Lefschetz Property. 
\end{example}

\begin{example} Consider a map $\phi: R(-8) \oplus R(-2)^4 \rightarrow R(-1)\oplus R^2$ whose cokernel is a finite length module $\mathbf M$. An elementary computation show that $\mathbf M=\mathbf M_0 \oplus \dots \oplus \mathbf M_{12}$ has Hilbert function $(2,7,11,14,16,17,17,17,16,14,11,7,2)$. The Buchsbaum-Rim complex associated to $\phi$ is:
\begin{equation}\label{exam5}
0\rightarrow R(-15)^2\oplus R(-14) \rightarrow R(-13)^4 \oplus R(-7) \rightarrow R(-8)\oplus R(-2)^4 \rightarrow R(-1)\oplus R^2 \rightarrow \mathbf M \rightarrow 0
\end{equation}

If we sheafify (\ref{exam5}) and tensor by $\mathcal O_{\mathbb P^2}(7)$ we get the exact sequence
\begin{equation}\label{exam6}
0\rightarrow \mathcal O_{\mathbb P^2}(-8)^2\oplus \mathcal O_{\mathbb P^2}(-7) \rightarrow \mathcal O_{\mathbb P^2}(-6)^4 \oplus \mathcal O_{\mathbb P^2} \rightarrow \mathcal O_{\mathbb P^2}(-1)\oplus \mathcal O_{\mathbb P^2}(5)^4 \rightarrow \mathcal O_{\mathbb P^2}(6)\oplus \mathcal O_{\mathbb P^2}(7)^2\rightarrow 0
\end{equation}

We can break (\ref{exam6}) into two short exact sequences
\begin{equation}\label{exam7}
0\rightarrow \mathcal O_{\mathbb P^2}(-8)^2\oplus \mathcal O_{\mathbb P^2}(-7) \rightarrow \mathcal O_{\mathbb P^2}(-6)^4 \oplus \mathcal O_{\mathbb P^2} \rightarrow \mathcal E\rightarrow 0
\end{equation}

and

\begin{equation}\label{exam8}
0\rightarrow \mathcal E\rightarrow \mathcal O_{\mathbb P^2}(-1)\oplus \mathcal O_{\mathbb P^2}(5)^4 \rightarrow \mathcal O_{\mathbb P^2}(6)\oplus \mathcal O_{\mathbb P^2}(7)^2 \rightarrow 0
\end{equation}
where $\mathcal E$ is a normalized rank 2 locally free sheaf with $c_1(\mathcal E)=-1$. From exact sequence (\ref{exam7}), we see that $H^0(\mathbb P^2, \mathcal E)>0$ and $H^0(\mathbb P^2, \mathcal E(-1))=0$. Therefore $\mathcal E$ is unstable with index of instability $k=0$. By Theorem~\ref{main}, $\phi_L(t)$ is injective for $t\leq 0$ and  surjective for $t\geq -1$
This corresponds to saying that the map $\times L:\mathbf M_{d-1}\rightarrow \mathbf M_d$ is injective for $d\leq 7$ and surjective for $d\geq 6$. Note that this implies bijectivity for $6\leq d\leq 7$ thus $\mathbf M_5, \mathbf M_6, \mathbf M_7$ all have the same dimension. Further note that for every value of $d$, the map $\times L:\mathbf M_{d-1}\rightarrow \mathbf M_d$ is either injective or surjective thus $\mathbf M$ has the Weak Lefschetz Property. 
\end{example}

In this paper, we have shown that the first cohomology module of a rank two locally free sheaf on $\mathbb P^2$ must have the Weak Lefschetz Property. This is equivalent to showing that if $\mathbf M$ is a finite length module arising as the cokernel of a map of the form $\phi:\mathbf F \rightarrow \mathbf G$ with $\mathbf F=\oplus_{i=1}^{n+2} R(-a_i)$ and $\mathbf G=\oplus_{i=1}^{n} R(-b_i)$, then $\mathbf M$ has the Weak Lefschetz Property. As a special case, every height three Artinian complete intersection has the Weak Lefschetz Property (proved first in \cite{harima2003weak} and proved again in \cite{brenner2007syzygy}). 

The key piece needed in the proofs of the main theorems is that $\mathcal E$ is a rank two locally free sheaf on a surface. Many of the key conclusions ultimately follow from this fact. This suggests that there may be generalizations of Theorem~\ref{main} to the case of rank two locally free sheaves on weighted projective planes and on $\mathbb P^1 \times \mathbb P^1$. We note the interesting paper by Harima and Watanabe where they considered the strong Lefschetz property for Artinian algebras with non-standard grading \cite{harima2007strong}. It is hoped that additional progress may be made in the understanding of Lefschetz Properties by considering the more general problem for modules over non-standard graded rings.

\

{\bf Acknowledgments}: The third author wishes to express thanks to a brief but helpful conversation with Aaron Bertram which helped crystallize some ideas in the proof of Proposition~\ref{bigprop}.
This paper is based on research partially supported by a grant of the group GNSAGA of INdAM and by the National Science Foundation under grant  \#1322508, \#1633830, and \#1712788.

\bibliographystyle{plain}
\bibliography{WLP}

\end{document}